\newtheorem{lemma}{Lemma}[section]
\newtheorem{theorem}[lemma]{Theorem}
\newtheorem{proposition}[lemma]{Proposition}
\newtheorem{definition}[lemma]{Definition}
\numberwithin{equation}{section}
\title{\textsf{ Local superderivations on Cartan type Lie superalgebras}}
\author{\textsc{Jixia Yuan,$^{1,2,}$}\footnote{Supported by  the National Natural Science Foundation of China (11601135), Natural Science Foundation of Heilongjiang Province of China (QC2017002) and Project funded by China Postdoctoral Science Foundation}\;\;\textsc{Liangyun Chen$^{1,}$}\footnote{Correspondence: chenly640@nenu.edu.cn. (L. Chen);  Supported by the National Natural Science Foundation
  of China (11771069, NSF of Jilin province (No. 20170101048JC) and the project of Jilin province department of education (No. JJKH20180005K))}\;\;  \textsc{ and Yan Cao$^{3,}$}\footnote{Supported by  the National Natural Science Foundation of China (11801121), Natural Science Foundation of Heilongjiang Province of China (QC2018006) } \\
  \\
  \textit{1. School of Mathematical and Statistics},
  \textit{Northeast Normal University} \\
  \textit{Changchun 130024, China}\\
\\
  \ \ \textit{2. School of Mathematical Sciences},
  \textit{Heilongjiang University} \\
  \textit{Harbin 150080, China}\\
  \\
  \ \ \textit{3.  Department of Mathematics},
  \textit{Harbin University of Science and Technology} \\
  \textit{Harbin 150080, China}
  }
\date{ }
\begin{document}
\maketitle
\begin{quotation}
\noindent\textbf{Abstract.} In this paper, we characterize the   local superderivations on  Cartan type Lie superalgebras over the complex field $\mathbb{C}$. Furthermore, we prove that every local   superderivations on Cartan type simple Lie superalgebras  is a  superderivations. As an application, using the results on local superderivations
we characterize the $2$-local superderivations on   Cartan type Lie superalgebras. We prove that every $2$-local   superderivations on Cartan type  Lie superalgebras  is a  superderivations.
\\

\noindent \textbf{Mathematics Subject Classification}. 17B40, 17B65, 17B66.

 \noindent \textbf{Keywords}.   Lie superalgebras, Cartan type
Lie superalgebras, superderivations, Local superderivations, $2$-Local superderivations.

  \end{quotation}

  \setcounter{section}{0}
\section{Introduction}

As a natural generalization of Lie algebras, Lie superalgebras are closely related to many branches of mathematics.
 The classification  of all   finite dimensional simple Lie  superalgebras over an algebraically closed field of characteristic zero has been obtained by Kac \cite{k1}, which consists of classical   Lie superalgebras  and Cartan type Lie superalgebras.  Cartan type Lie superalgebras play an important role in the category of Lie superalgebras.   Cartan type Lie superalgebras over $\mathbb{C}$ are subalgebras of
the full superderivation algebras of the  exterior
superalgebras.  The structural theory of these superalgebras has been playing a key role in the theory of  Lie superalgebras.
Derivations and generalized derivations are very important notions in the
research of   algebras and their generalizations.

The concept of local derivation was introduced    in 1990 by Kadison \cite{k2},  Larson and   Sourour \cite{ls}, and the authors studied  local derivations of Banach algebra. In 2001 Johnson  showed that every local derivation from a $\mathbb{C}^{*}$-algebra $A$ into a Banach $A$-bimodule is a
derivation\cite{jl}. Local derivations on the
algebra $S(M,\tau)$ were studied deeply in paper \cite{aakn}. In recent years, local derivations have aroused the interest of a great many authors, see  \cite{c,hl,z}.  The  local derivations of Lie  algebras have been sufficiently studied. In 2016, the  local derivations of Lie algebras were proved  by Ayupov and Kudaybergenov\cite{ak}, and the authors proved that every local   derivation of a finite dimensional semisimple Lie algebras  over an algebraically closed field of characteristic zero is a  derivation. In 2018, Ayupov and Kudaybergenov showed that in the class of solvable Lie algebras there exist two facts. One is that local derivation is different from any other derivation and the second is that there indeed exists a kind of algebras in which each local derivations is a derivation\cite{ak1}.
In 2017,  Chen, Wang
and  Nan  mainly studied  local superderivations  on basic classical Lie superalgebras,  and the authors proved that every local superderivation  on basic classical Lie superalgebras except for $A(1, 1)$ over the complex number field $\mathbb{C}$ is a superderivation\cite{cwn}. In 2018,  Chen and  Wang
 studied  local superderivations  on Lie superalgebras $\mathfrak{q}(n)$,  and the authors proved  that every local superderivation on
 $\mathfrak{q}(n)$, $n>3$, is a superderivation\cite{cw}.

In this paper, we are interested in determining all local superderivations and $2$-local superderivations on Cartan type Lie superalgebras over $\mathbb{C}$.
Let $L$ be a Cartan type Lie superalgebra over $\mathbb{C}$.  The main result in this paper is a complete characterization of the local superderivations   on  $L$:
$$
\mathrm{LDer}(L)=\mathrm{Der}(L).
$$

The paper is organized as follows. In Section 2, we recall some necessary concepts and notations. In   Section 3, we  establish  several  lemmas, which will be used to characterize the local superderivations on Cartan type Lie superalgebras. In Section 4, we determine all local superderivations on Cartan type Lie superalgebras. In Section 5,  as an application, using the results on local superderivations  we determine all $2$-local superderivations on Cartan type Lie superalgebras.

\section{Preliminaries}
Throughout $\mathbb{C}$ is the  field of complex numbers, $\mathbb{N}$ the
set of nonnegative integers and
$\mathbb{Z}_2= \{\bar{0},\bar{1}\}$   the additive group of two
elements.  For a  vector superspace $V=V_{\bar{0}}\oplus
V_{\bar{1}}, $ we write $|x|$ for the
parity of    $x\in V_{\alpha},$ where $\alpha\in \mathbb{Z}_2$. Once the symbol $|x|$ appears
in this paper, it
 will imply that $x$ is  a  $\mathbb{Z}_2$-homogeneous element. We also adopt the following notation: For  a proposition $P$, put
 $\delta _{P}=1$ if $P$ is true and $\delta _{P}=0$ otherwise.

\subsection{Lie superalgebras, superderivation }
Let us  recall some definitions relative to Lie superalgebras and superderivations \cite{k1}.
\begin{definition}
A Lie superalgebra is a vector superspace $L=L_{\bar{0}}\oplus
L_{\bar{1}}$ with an even bilinear   mapping
$[\cdot,\cdot]: L\times L\longrightarrow L$   satisfying the following axioms:
\begin{eqnarray*}
&& [x,y]=-(-1)^{|x||y|}[y,x],\\
&&[x,[y,z]]=[[x,y],z]+(-1)^{|x||y|}[y,[x,z]]
\end{eqnarray*}
for all $x, y, z\in L.$
\end{definition}
\begin{definition}
We call a  linear map $D:  L \longrightarrow L$ a  superderivation of Lie superalgebra $L$ if it satisfies the
following   equation:
\begin{eqnarray*}\label{yuantange1}
&& D([x,y])=[D(x), y]+(-1)^{|D||x|}[x, D(y)]
\end{eqnarray*}
for all $x, y \in L.$
\end{definition}
Write $\mathrm{Der}_{\bar{0}}(L)$ (resp. $\mathrm{Der}_{\bar{1}}(L)$) for the set of all superderivations
of $\mathbb{Z}_{2}$-homogeneous $\bar{0}$ (resp.  $\bar{1}$) of $L$.  Denote
$$\mathrm{Der}(L)=\mathrm{Der}_{\bar{0}}(L)\oplus \mathrm{Der}_{\bar{1}}(L).$$

\subsection{local superderivation  and $2$-local superderivation}
Let us  recall some definitions relative to local superderivations  and 2-local superderivations \cite{cwn,wcn}. Let $L$ be a Lie superalgebra.
\begin{definition}
Recall that a linear map  $\phi: L\longrightarrow L$ is called a local superderivation if for every $x\in L$, there exists a superderivation $D_{x}\in \mathrm{Der}L$ (depending on $x$) such that $\phi(x)=D_{x}(x)$.
\end{definition}
\begin{definition}
Recall that a linear map  $\phi: L\longrightarrow L$ is called a $2$-local superderivation if for any two elements $x, y\in L$, there exists a superderivation $D_{x,y}\in \mathrm{Der}L$ (depending on $x, y$) such that $\phi(x)=D_{x,y}(x)$ and $\phi(y)=D_{x,y}(y)$.
\end{definition}
 A Local superderivation $\phi$ of $\mathbb{Z}_{2}$-homogeneous $\alpha$ of $L$ is a local superderivation such
that $\phi(L_{\beta})\subseteq L_{\alpha+\beta}$ for any $\beta\in \mathbb{Z}_{2}$. Write $\mathrm{LDer}_{\bar{0}}(L)$ (resp. $\mathrm{LDer}_{\bar{1}}(L)$) for the set of all super-biderivations
of $\mathbb{Z}_{2}$-homogeneous $\bar{0}$ (resp.  $\bar{1}$) of $L$.  Denote
$$\mathrm{LDer}(L)=\mathrm{LDer}_{\bar{0}}(L)\oplus \mathrm{LDer}_{\bar{1}}(L).$$

\subsection{Cartan type Lie superalgebras}

Let $n\geq 4$ be   an   integer and $\Lambda(n)$ be the exterior algebra in $n$ indeterminates $x_{1}, x_{2},\ldots, x_{n}$ with $\mathbb{Z}_{2}$-grading structure given by $|x_{i}|=\bar{1}.$   One may define a $\mathbb{Z}$-grading on $\Lambda(n)$ by letting $\mathrm{deg}\,x_{i}=1,$ where $1 \leq i\leq n.$ Write $n=2r$ or $n=2r+1,$ where $r\in \mathbb{N}.$ Put
 $\left[\frac{n}{2}\right]=r.$
 Cartan type Lie superalgebras consist of
four series of  simple Lie superalgebras contained in the full superderivation algebras of $\Lambda(n)$:
\begin{eqnarray*}
&&W(n)=\left\{\sum_{i=1}^{n}f_{i}\partial_{i}\mid  f_{i}\in \Lambda(n)\right\},
\\&&
S(n)=\left\{\sum_{i=1}^{n}f_{i}\partial_{i}\mid  f_{i}\in \Lambda(n), \sum_{i=1}^{n}\partial_{i}(f_{i})=0\right\},
\\&&
\widetilde{S}(n)=\left\{(1-x_{1}x_{2}\cdots x_{n})\sum_{i=1}^{n}f_{i}\partial_{i}\mid  f_{i}\in \Lambda(n), \sum_{i=1}^{n}\partial_{i}(f_{i})=0\right\}\; \mbox{(}n \;\mbox{is an even integer}\mbox{)},
\\&&
H(n)=\left\{\mathrm{D_{H}}(f)\mid  f\in \oplus_{i=0}^{n-1}\Lambda(n)_{i}\right\} \;\mbox{(}n> 4\mbox{)},
\end{eqnarray*}
where
$$\mathrm{D_{H}}(f)=(-1)^{|f|}\sum_{i=1}^{n}\partial_{i}(f)\partial_{i'},$$
\[ i'=\left\{
 \begin{array}{lll}
i+r,&\mbox{if  $1 \leq i\leq \left[\frac{n}{2}\right],$ }\\
i-r,&\mbox{if $ \left[\frac{n}{2}\right]< i\leq 2\left[\frac{n}{2}\right],$}\\
i,&\mbox{otherwise}.\\
\end {array}
\right.
\]

 One may define a $\mathbb{Z}$-grading on $W(n)$ by letting $\mathrm{deg}\,x_{i}=1=-\mathrm{deg}\,\partial_{i},$ where $1 \leq i\leq n.$ Thus $W(n)$   becomes a $\mathbb{Z}$-graded Lie superalgebra of 1 depth: $W(n)=\bigoplus_{j=-1}^{\xi_{W}}W(n)_{j},$ where $\xi_{W}=n-1.$
Suppose $L=S(n)$ or $H(n)$. Then $L$ is a  $\mathbb{Z}$-graded subalgebra  of $W(n)$. The $\mathbb{Z}$-grading is defined as follows:
  $L=\bigoplus_{j=-1}^{\xi_{L}}L_{j},$
  where $L_{j}=L\cap W(n)_{j}$ and
 \[
\xi_{L}=\left\{
 \begin{array}{ll}
n-2,&\mbox{if  $L=S(n),$ }\\
n-3,&\mbox{if $H(n)$.}
\end {array}
\right.
\]
Put $$\xi_{i}=x_{1}x_{2}\cdots x_{n}\partial_{i},\;\widetilde{S}(n)_{-1}=\mathrm{span}_{\mathbb{C}}\left\{\partial_{i}-\xi_{i}\mid  1\leq i \leq n\right\},\; \widetilde{S}(n)_{i}=S(n)_{i} \;\mbox{for\;} i>-1.$$
Then $\widetilde{S}(n)$ becomes a $\mathbb{Z}_{n}$-graded Lie superalgebra:
$
\widetilde{S}(n)=\bigoplus_{i=-1}^{\xi_{\widetilde{S}}}\widetilde{S}(n)_{i},
 $
 where $\xi_{\widetilde{S}}=n-2.$
The $0$-degree components of these superalgebras are classical Lie algebras:
$$W(n)_{0}\cong \mathfrak{gl}(n),\;S(n)_{0}=\widetilde{S}(n)_{0}\cong \mathfrak{sl}(n),\; H(n)_{0}\cong \mathfrak{so}(n).$$

Let $L=\oplus_{i\in \mathbb{Z}}L_{i}$ be a $\mathbb{Z}$-graded Lie superalgbra, $H_{L}$ be the standard Cartan subalgebra  of $L$, $\theta\in H_{L}^{*}$ be the zero root, $\Delta_{L}$ be the root system of $L$.
Let us describe the roots  of Cartan type Lie superalgebras. If  $L=W(n),$ we choose the standard basis $\{\varepsilon_{1}, \ldots, \varepsilon_{n}\}$ in $H_{L}^{*},$ and then
$$
\Delta_{L}=\{\varepsilon_{i_{1}}+\cdots +\varepsilon_{i_{k}}-\varepsilon_{i} \mid  1\leq i_{1} <\cdots< i_{k}\leq n,  1\leq i\leq n \}.
$$
The root systems of $S(n)$ and $\widetilde{S}(n)$ are obtained from the root system of $W(n)$
by removing the roots $\varepsilon_{1}+\cdots +\varepsilon_{n}-\varepsilon_{i},$ where $1\leq i \leq n.$
Finally if $L=H(n)$, then
$$\Delta_{L}=\left\{\pm \varepsilon_{i_{1}}\pm\cdots \pm\varepsilon_{i_{k}}  \mid  1\leq i_{1} <\cdots< i_{k}\leq \left[\frac{n}{2}\right]\right\}.$$

\section{General lemmas}
In this section, let us establish  several  lemmas,  which will be used to characterize the local superderivations on Cartan type Lie superalgebras. Put $\mathcal{C}=\sum_{i=1}^{n}x_{i}\partial_{i}$ and $\widetilde{H}(n)=\left\{\mathrm{D_{H}}(f)\mid  f\in \Lambda(n)\right\} $. By \cite{s}, we have the following lemma.
\begin{lemma}\label{l1}
Let $L$ be a Cartan type Lie superalgebra. Then  $\mathrm{Der}\,L=\mathrm{ad}L',$ where
\[L'\cong\left\{
 \begin{array}{lll}
L, &\mbox{if  $L=W(n), \widetilde{S}(n),$ }\\
L\oplus \mathbb{C}\mathcal{C}, &\mbox{if  $L=S(n),$ }\\
\widetilde{H}(n)\oplus \mathbb{C}\mathcal{C}, &\mbox{if  $L=H(n).$ }
\end {array}
\right. \]
\end{lemma}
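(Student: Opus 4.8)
The plan is to use the $\mathbb{Z}$-grading $L=\bigoplus_{j=-1}^{\xi_{L}}L_{j}$ (and, for $\widetilde{S}(n)$, its $\mathbb{Z}_{n}$-grading) to split an arbitrary superderivation into homogeneous pieces and realize each piece inside $\mathrm{ad}\,L'$. First I would note that $\mathrm{Der}\,L$ is itself $\mathbb{Z}$-graded: setting $\mathrm{Der}(L)_{s}=\{D\in\mathrm{Der}\,L\mid D(L_{j})\subseteq L_{j+s}\ \text{for all }j\}$, every $D\in\mathrm{Der}\,L$ decomposes as a finite sum $D=\sum_{s}D^{(s)}$ with $D^{(s)}\in\mathrm{Der}(L)_{s}$, since $L$ is finite dimensional and concentrated in degrees $-1,\dots,\xi_{L}$. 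Because $\mathrm{ad}$ is degree-preserving, it suffices to prove $\mathrm{Der}(L)_{s}=\mathrm{ad}(L'_{s})$ for every $s$, where $L'_{s}$ denotes the degree-$s$ part of the algebra $L'$ in the statement. The super-sign $(-1)^{|D||x|}$ in the derivation identity only affects bookkeeping and does not interfere with this grading decomposition.

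The two structural facts I would rely on, which I would either establish directly or take from the standard theory of transitive graded Lie superalgebras, are: (i) \emph{transitivity} --- if $x\in L_{j}$ with $j\geq 0$ and $[x,L_{-1}]=0$, then $x=0$, coming from the nondegeneracy of $[f\partial_{k},\partial_{i}]=-\partial_{i}(f)\partial_{k}$ on $\Lambda(n)$; and (ii) \emph{local generation} --- $L$ is generated by $L_{-1}\oplus L_{0}\oplus L_{1}$. For a positive degree $s\geq 1$ these give a clean argument: the restriction $D^{(s)}|_{L_{-1}}\colon L_{-1}\to L_{s-1}$ can be matched by $\mathrm{ad}\,u$ for a suitable $u\in L_{s}$ (resp. $u\in\widetilde{H}(n)_{s}$ in the $H(n)$ case), after which $D^{(s)}-\mathrm{ad}\,u$ kills $L_{-1}$ and transitivity forces $D^{(s)}=\mathrm{ad}\,u$. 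For a negative degree $s\leq -1$ the map $D^{(s)}$ already annihilates $L_{-1}$, so I would instead pin down $u\in L_{s}$ using the nondegenerate bracket $L_{1}\times L_{-1}\to L_{0}$ on a generating component and conclude equality by local generation. The content of the matching step is the surjectivity of $u\mapsto[u,-]$ onto the restrictions arising from derivations.

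The degree-zero component is the most delicate and is where the summand $\mathbb{C}\mathcal{C}$ appears. Restricting $D^{(0)}$ to $L_{0}$, which by the identifications above is $\mathfrak{gl}(n)$, $\mathfrak{sl}(n)$ or $\mathfrak{so}(n)$, I would subtract an inner derivation $\mathrm{ad}\,h$ with $h\in L_{0}$ to kill the part of $D^{(0)}|_{L_{0}}$ lying in $\mathrm{ad}\,L_{0}$; since $\mathfrak{sl}(n)$ and $\mathfrak{so}(n)$ are semisimple this removes everything on $L_{0}$, while for $\mathfrak{gl}(n)$ one must also account for the central scaling. The residual derivation $D'$ then commutes with the $L_{0}$-action on $L_{-1}$, and since $L_{-1}$ is an irreducible $L_{0}$-module, Schur's lemma gives $D'|_{L_{-1}}=\lambda\,\mathrm{id}$. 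As the grading operator $\mathcal{C}=\sum_{i=1}^{n}x_{i}\partial_{i}$ satisfies $[\mathcal{C},D]=(\deg D)\,D$, we have $\mathrm{ad}\,\mathcal{C}|_{L_{-1}}=-\mathrm{id}$, so $D'+\lambda\,\mathrm{ad}\,\mathcal{C}$ annihilates $L_{-1}$ and $L_{0}$, whence it vanishes by transitivity and local generation. Thus every degree-$0$ derivation lies in $\mathrm{ad}\,L_{0}+\mathbb{C}\,\mathrm{ad}\,\mathcal{C}$. For $W(n)$ one checks $\mathcal{C}\in W(n)_{0}$, so $\mathrm{ad}\,\mathcal{C}$ is inner and, using the $L_{-1}$-compatibility, the central scaling cannot occur; for $S(n)$ and $H(n)$ the operator $\mathcal{C}$ lies outside $L$ and has trivial centralizer in $W(n)$, so $\mathrm{ad}\,\mathcal{C}$ is genuinely outer --- this is exactly the $\mathbb{C}\mathcal{C}$ in $L'$.

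The remaining work, and what I expect to be the main obstacle, is the algebra-specific verification of the matching/surjectivity step at the boundary degrees, where the defining constraints enter. For $S(n)$ and $\widetilde{S}(n)$ this means controlling $u\mapsto[u,-]$ subject to the divergence condition $\sum_{i}\partial_{i}(f_{i})=0$; for $H(n)$ the realization in top degree fails inside $H(n)$ but succeeds in $\widetilde{H}(n)=\{\mathrm{D_{H}}(f)\mid f\in\Lambda(n)\}$, the single extra generator being $\mathrm{D_{H}}(x_{1}x_{2}\cdots x_{n})$ built from the top exterior monomial --- confirming that this one element together with $\mathcal{C}$ captures all otherwise-unrealizable derivations is the crux. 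Finally, the $\widetilde{S}(n)$ case requires checking that, in contrast to $S(n)$, no $\mathbb{C}\mathcal{C}$ summand survives: here one tracks how the twist $1-x_{1}x_{2}\cdots x_{n}$ and the modified $\widetilde{S}(n)_{-1}=\mathrm{span}_{\mathbb{C}}\{\partial_{i}-\xi_{i}\}$ make the grading derivation inner. These boundary computations, governed by the exterior-algebra structure of $\Lambda(n)$, are where the genuine effort lies.
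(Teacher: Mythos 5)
The paper does not actually prove this lemma: it is quoted directly from Scheunert's book \cite{s} (the computation goes back to the classification literature), so there is no in-paper argument to compare yours against. Your outline is the standard strategy for that computation --- grade $\mathrm{Der}\,L$ by the $\mathbb{Z}$-grading of $L$ (respectively the $\mathbb{Z}_n$-grading for $\widetilde{S}(n)$), use transitivity and generation by the local part, and treat degree $0$ separately via the grading operator $\mathcal{C}$ --- so the skeleton is the right one.

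As a proof, however, the essential content is missing, and you say so yourself: the ``matching/surjectivity step'' you defer \emph{is} the lemma. Concretely: (a) for $s\ge 1$ you must show that the restriction map $\mathrm{Der}(L)_s\to\mathrm{Hom}(L_{-1},L_{s-1})$ has image contained in that of $\mathrm{ad}(L'_s)$; this is a comparison of $L_0$-modules that genuinely depends on the divergence condition for $S(n)$ and on the Poisson-type structure for $H(n)$ (it is exactly where $\mathrm{D_{H}}(x_1\cdots x_n)\in\widetilde{H}(n)\setminus H(n)$ is forced), and nothing in your sketch establishes it. (b) For $s\le -1$, ``pin down $u$ using the nondegenerate bracket'' is not an argument; the standard route is that $D^{(s)}|_{L_0}$ is a $1$-cocycle with values in $L_s$, killed by Whitehead's lemma when $L_0$ is semisimple, followed by the observation that $L_1$ and $L_{-1}$ are non-isomorphic $L_0$-modules, and neither input appears in your plan. (c) In degree $0$, Schur's lemma on $L_{-1}$ only applies after you know $D'|_{L_0}=0$; for $W(n)$ one has $L_0\cong\mathfrak{gl}(n)$, which is reductive but not semisimple, so you cannot quote semisimplicity and must separately rule out the derivation of $\mathfrak{gl}(n)$ supported on its center. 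Finally, your closing remark that the twist makes ``the grading derivation inner'' for $\widetilde{S}(n)$ is off: $\mathrm{ad}\,\mathcal{C}$ does not even stabilize $\widetilde{S}(n)$, because $\partial_i-\xi_i$ is not $\mathbb{Z}$-homogeneous; the correct assertion is simply that every derivation of $\widetilde{S}(n)$ is inner, which again requires its own verification.
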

Let $L$ be a  Cartan type Lie superalgebra. By Lemma \ref{l1} and a simple computation, we have
$\Delta_{L'}=\Delta_{L}$  and the following lemma.
\begin{lemma}\label{l2}
Let $L$ be a  Cartan type Lie superalgebra. Then
 $L'$ is transitive, that is $a\in \oplus_{i\geq 0}L'_{i} $ and  $\left[a, L'_{-1}\right]=0,$ then $a=0.$

\end{lemma}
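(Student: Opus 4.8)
The plan is to realize every algebra $L'$ from Lemma \ref{l1} as a subalgebra of $W(n)$ (note $\mathcal{C}\in W(n)_{0}$ and $\widetilde{H}(n)\subseteq W(n)$) and to reduce transitivity to a direct bracket computation against $L'_{-1}$. The first step is to pin down $L'_{-1}$ in each case. Since a homogeneous element $\sum_{i}f_{i}\partial_{i}\in W(n)_{k}$ has $\deg f_{i}=k+1$, its degree $-1$ layer consists of constant-coefficient fields, and one checks $W(n)_{-1}=S(n)_{-1}=\widetilde{H}(n)_{-1}=\mathrm{span}_{\mathbb{C}}\{\partial_{1},\dots,\partial_{n}\}$ (for $H(n)$ using $\mathrm{D_{H}}(x_{j})=-\partial_{j'}$ together with the fact that $j\mapsto j'$ is a bijection), whereas for $\widetilde{S}(n)$ the twist gives $L'_{-1}=\mathrm{span}_{\mathbb{C}}\{\partial_{i}-\xi_{i}\mid 1\leq i\leq n\}$. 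Thus the cases $W(n),S(n),H(n)$ are uniform, and only $\widetilde{S}(n)$ needs extra care.

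For the core computation I would take an arbitrary $a=\sum_{i}f_{i}\partial_{i}\in\oplus_{i\geq 0}L'_{i}$; the grading constraint says precisely that each $f_{i}$ has no constant term, i.e. $\deg f_{i}\geq 1$. A short calculation with the supercommutator of first-order fields shows that the second-order terms cancel and yields $[a,\partial_{j}]=\pm\sum_{i}\partial_{j}(f_{i})\partial_{i}$, the sign depending only on the parity of $f_{i}$. Since $\{\partial_{1},\dots,\partial_{n}\}$ is a free basis of the $\Lambda(n)$-module $W(n)$, the hypothesis $[a,\partial_{j}]=0$ for all $j$ forces $\partial_{j}(f_{i})=0$ for all $i,j$. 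The only elements of $\Lambda(n)$ annihilated by every $\partial_{j}$ are the constants, so each $f_{i}$ is constant; as $f_{i}$ has no constant term, $f_{i}=0$ and hence $a=0$. This settles $W(n),S(n),H(n)$ at once.

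The hard part is $\widetilde{S}(n)$, where $L'_{-1}$ is spanned by $\partial_{i}-\xi_{i}$ rather than by the $\partial_{i}$, so the clean identity above does not apply directly. Here I would exploit the truncation of the $\mathbb{Z}$-grading of $W(n)$, namely $W(n)_{m}=0$ for $m>n-1$. Writing $a=\sum_{k=0}^{n-2}a_{k}$ with $a_{k}\in\widetilde{S}(n)_{k}$, the element $\xi_{i}=x_{1}\cdots x_{n}\partial_{i}$ lies in $W(n)_{n-1}$, so $[a_{k},\xi_{i}]\in W(n)_{k+n-1}$ vanishes for $k\geq 1$ and $[a,\xi_{i}]=[a_{0},\xi_{i}]$ sits in degree $n-1$, while $[a,\partial_{i}]$ is spread over the degrees $-1,\dots,n-3$. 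Comparing $\mathbb{Z}$-homogeneous components of $0=[a,\partial_{i}-\xi_{i}]=[a,\partial_{i}]-[a_{0},\xi_{i}]$ and using $n-1>n-3$, the two contributions cannot interfere, whence $[a_{k},\partial_{i}]=0$ for all $k$, i.e. $[a,\partial_{i}]=0$ for every $i$. This reduces the $\widetilde{S}(n)$ case to the previous paragraph, so again $a=0$. The single genuine subtlety is thus this last reduction, resolved purely by the boundedness of the grading.
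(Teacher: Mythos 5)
Your proof is correct, and it is worth noting that the paper itself offers no argument for this lemma at all: it is asserted to follow from Lemma 3.1 ``and a simple computation,'' with the background reference to Scheunert's lecture notes, where transitivity of the Cartan type algebras is standard. So your proposal is not a variant of the paper's proof but a full workout of the computation the paper leaves implicit. The details check out: the identification of $L'_{-1}$ with $\mathrm{span}\{\partial_1,\dots,\partial_n\}$ in the $W$, $S$, $H$ cases is right (and $\mathcal{C}\in W(n)_0$ indeed contributes nothing in degree $-1$); the supercommutator identity $[f_i\partial_i,\partial_j]=(-1)^{|f_i|}\partial_j(f_i)\,\partial_i$ holds because the second-order terms cancel via $\partial_i\partial_j+\partial_j\partial_i=0$; and freeness of $W(n)$ over $\Lambda(n)$ then kills all $f_i$ once they have no constant term. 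The one place a real idea is needed is $\widetilde{S}(n)$, and your degree-separation argument is the right one: since $\widetilde{S}(n)_k=S(n)_k\subseteq W(n)_k$ for $k\geq 0$, you may compare honest $\mathbb{Z}$-degrees inside $W(n)$ even though $\widetilde{S}(n)$ itself only carries a $\mathbb{Z}_n$-grading, and the bound $W(n)_m=0$ for $m>n-1$ forces $[a_k,\xi_i]=0$ for $k\geq 1$ while the surviving term $[a_0,\xi_i]$ sits in degree $n-1$, disjoint from the degrees $\leq n-3$ occupied by the $[a_k,\partial_i]$. This cleanly reduces $\widetilde{S}(n)$ to the untwisted case. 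I find no gap.
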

Suppose $L$ is a Cartan type Lie superalgebra. For  $i\in \mathbb{Z}$ and $\alpha\in \Delta_{L'}$, we
put
$$
\mathrm{LDer}(L)_{i\times \alpha}=\{\phi\in \mathrm{LDer}(L)\mid  \;\mbox{there exists}\;  u_{x}^{i} \in L'_{i}\cap L'_{\alpha}  \;\mbox{such that}\;  \phi(x)=[u_{x}^{i}, x] \;\mbox{for all}\; x\in L\}.
$$
Then we have the following lemma.
\begin{lemma}\label{chenyuanl1}
Let $L$ be a Cartan type Lie superalgebra. Then  the following conclusion hold:
\begin{itemize}
  \item [(1)] If $L\neq \widetilde{S}(n)$, then $
\mathrm{LDer}(L)=\bigoplus\limits_{i\in \mathbb{Z}, \alpha\in \Delta_{L'}}\mathrm{LDer}(L)_{i\times\alpha}.
$
  \item [(2)] If $L= \widetilde{S}(n)$, then $
\mathrm{LDer}(L)=\bigoplus\limits_{i\in \mathbb{Z}_{n}, \alpha\in \Delta_{L'}}\mathrm{LDer}(L)_{i\times\alpha}.
$
\end{itemize}

 \end{lemma}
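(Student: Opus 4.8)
The plan is to exploit simultaneously the two compatible gradings carried by $L$ and by $L'$. Recall that $L$ has a $\mathbb{Z}$-grading (a $\mathbb{Z}_n$-grading when $L=\widetilde{S}(n)$) together with the root space decomposition relative to $H_L$; since the Cartan subalgebra sits in degree $0$, these are compatible and $L=\bigoplus_{j,\beta}(L_j\cap L_\beta)$, and by Lemma \ref{l1} the same holds for $L'$, namely $L'=\bigoplus_{i,\alpha}(L'_i\cap L'_\alpha)$ with $\alpha$ running over $\Delta_{L'}\cup\{\theta\}$. I would call a linear map $\psi\colon L\to L$ of bidegree $(i,\alpha)$ if $\psi(L_j\cap L_\beta)\subseteq L_{j+i}\cap L_{\beta+\alpha}$ for all $(j,\beta)$. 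Because $\mathrm{End}(L)$ is finite dimensional and both gradings are finite, every $\phi\in\mathrm{LDer}(L)$ decomposes uniquely as a finite sum $\phi=\sum_{i,\alpha}\phi_{i\times\alpha}$ of bidegree-$(i,\alpha)$ maps, where $\phi_{i\times\alpha}=\sum_{j,\beta}P_{j+i,\beta+\alpha}\,\phi\,P_{j,\beta}$ and $P_{j,\beta}$ denotes the projection of $L$ onto $L_j\cap L_\beta$. The lemma then reduces to proving that each $\phi_{i\times\alpha}$ actually lies in $\mathrm{LDer}(L)_{i\times\alpha}$, together with the directness of the sum.

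First I would settle the behaviour on homogeneous elements, which is the formal part. Fix a homogeneous $x\in L_j\cap L_\beta$. Since $\phi$ is a local superderivation and, by Lemma \ref{l1}, every superderivation of $L$ is inner, there is $u_x\in L'$ with $\phi(x)=[u_x,x]$. Writing $u_x=\sum_{i,\alpha}u_x^{i,\alpha}$ with $u_x^{i,\alpha}\in L'_i\cap L'_\alpha$, one has $[u_x^{i,\alpha},x]\in L_{j+i}\cap L_{\beta+\alpha}$, so these summands sit in pairwise distinct bidegrees; projecting onto $L_{j+i}\cap L_{\beta+\alpha}$ isolates a single term and gives $\phi_{i\times\alpha}(x)=[u_x^{i,\alpha},x]$ with $u_x^{i,\alpha}\in L'_i\cap L'_\alpha$. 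Thus every $\phi_{i\times\alpha}$ already has the required form on each homogeneous element.

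The crux, and the step I expect to be the main obstacle, is to promote this to an arbitrary $x\in L$: one must produce, for each (not necessarily homogeneous) $x$, a single $u_x^i\in L'_i\cap L'_\alpha$ with $\phi_{i\times\alpha}(x)=[u_x^i,x]$. Writing $x=\sum_s x_s$ in bidegrees, the previous step only yields $\phi_{i\times\alpha}(x)=\sum_s[u_{x_s}^{i,\alpha},x_s]$ with the elements $u_{x_s}^{i,\alpha}\in L'_i\cap L'_\alpha$ depending a priori on the piece $x_s$, and such a sum of brackets with independent elements need not lie in $[L',x]$ at all; the content is exactly that these local data cohere into one common element. I would derive this coherence from the global local superderivation property of the original $\phi$, applying $\phi(x)=[u_x,x]$ to sums $x_s+x_{s'}$ of homogeneous elements and comparing $L_{j+i}\cap L_{\beta+\alpha}$-components to see that the actions on different graded pieces are induced by a single element; here the explicit description of the spaces $L'_i\cap L'_\alpha$ read off from the root systems of Section 2, together with transitivity (Lemma \ref{l2}) used to propagate agreement from the generating component $L'_{-1}$, should remove the spurious freedom, the Cartan case $(i,\alpha)=(0,\theta)$ (where $L'_0\cap L'_\theta$ is the Cartan subalgebra of $L'$) being handled in the same spirit through the diagonal action on root vectors. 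Directness of $\bigoplus_{i,\alpha}\mathrm{LDer}(L)_{i\times\alpha}$ is then immediate, since maps of distinct bidegrees have images in distinct summands $L_{j+i}\cap L_{\beta+\alpha}$ on every fixed $L_j\cap L_\beta$; and the case $L=\widetilde{S}(n)$ is identical once $\mathbb{Z}$ is replaced by the cyclic grading group $\mathbb{Z}_n$.
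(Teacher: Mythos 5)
Your decomposition of $\phi$ is not the one the paper uses, and the difference matters. You split $\phi$ into its bidegree components $\phi_{i\times\alpha}=\sum_{j,\beta}P_{j+i,\beta+\alpha}\,\phi\,P_{j,\beta}$, which makes linearity of each piece automatic but forces you to prove afterwards that each component admits, for every (not necessarily homogeneous) $x$, a \emph{single} witness $u^{i}_x\in L'_i\cap L'_\alpha$ with $\phi_{i\times\alpha}(x)=[u^{i}_x,x]$. You correctly identify this as the crux, but you do not prove it: the passage ``applying $\phi(x)=[u_x,x]$ to sums $x_s+x_{s'}$ and comparing components \dots\ should remove the spurious freedom'' is a plan, not an argument. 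Moreover the comparison it rests on is delicate: for a non-homogeneous element the bracket $[u_{x_s+x_{s'}},x_s+x_{s'}]$ splits into terms $[u^{i,\alpha},x_s]$ whose bidegrees $(j_s+i,\beta_s+\alpha)$ can coincide for different pairs $(s,(i,\alpha))$, so projecting does not cleanly isolate the pieces you want to match; nor is it clear how transitivity (Lemma \ref{l2}), which concerns elements of $\bigoplus_{i\geq 0}L'_i$ annihilating $L'_{-1}$, would enter at this stage. As written, the central step of your proof is missing.

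The paper takes a different and, for this statement, shorter route that sidesteps your obstacle entirely: for each arbitrary $x\in L$ it fixes one witness $u_x\in L'$ with $\phi(x)=[u_x,x]$ (Lemma \ref{l1}), decomposes that \emph{element} as $u_x=\sum_{i,\alpha}u_x^{i,\alpha}$ in the $\mathbb{Z}\times\Delta_{L'}$-grading of $L'$, and defines $\phi_{i\times\alpha}(x)=[u_x^{i,\alpha},x]$ pointwise. With this definition the existence of a single witness in $L'_i\cap L'_\alpha$ and the identity $\sum_{i,\alpha}\phi_{i\times\alpha}=\phi$ hold by construction, and what remains (dismissed in the paper as ``a direct verification'') is that each $\phi_{i\times\alpha}$ is a well-defined linear map --- i.e.\ the burden you face is traded for a different one, which is settled on homogeneous elements exactly by the projection argument you give in your second paragraph. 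If you want to keep your projection-based definition you must actually carry out the coherence argument you only sketch; otherwise, adopting the paper's pointwise definition of the components discharges the witness condition immediately.
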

\begin{proof}
(1) By Lemma \ref{l1}, we have ``$\supseteq$'' part  is complete. Next, we verify  the ``$\subseteq$'' part. Let $\phi\in \mathrm{LDer}(L).$ For each $x\in L,$ by Lemma \ref{l1} there exists an element $u_{x}\in L'$ such that $\phi(x)=[u_{x}, x],$ where $u_{x}\in L'.$  Since $L'$ has the $\mathbb{Z}\times \Delta_{L'}$-grading,  we can write
$$u_{x}=\sum_{i\in \mathbb{Z}, \alpha\in \Delta_{L'}}u_{x}^{i,\alpha},$$
 where $u_{x}^{i,\alpha}\in L'_{i\times\alpha}.$ For $i\in \mathbb{Z}$ and  $\alpha\in \Delta_{L'},$ we set
 $$\phi_{i\times\alpha}(x)=[u_{x}^{i,\alpha}, x].$$
  A direct verification shows that $\phi_{i\times\alpha}\in \mathrm{LDer}(L)_{i\times\alpha}$ and
$$
\sum_{i\in \mathbb{Z},\alpha\in \Delta_{L'}}\phi_{i\times\alpha}(x)=\sum_{i\in \mathbb{Z},\alpha\in \Delta_{L'}}[u_{x}^{i,\alpha}, x]=[u_{x}, x]=\phi(x).
$$

(2)  A similar argument as for $L\neq \widetilde{S}(n)$ works also  for $L=\widetilde{S}(n).$

\end{proof}

\section{Local Superderivations  of Cartan type Lie superalgebras}
In this section we shall
characterize the local superderivations on Cartan type Lie superalgebras.
 Let $L$ be a Cartan type Lie superalgebra  and $\{h_{1}, \ldots, h_{l}\}$ be  the standard basis of $H_{L}.$ Set $h_{0}=\sum_{i=1}^{l}t^{i}h_{i},$ where $t$ is a fixed algebraic number from $\mathbb{C}$ of degree bigger than $l.$ Then we have the following propositions.
\begin{proposition}\label{chenl1}
Let $L$ be a Cartan type Lie superalgebra and $\phi\in \mathrm{LDer}(L).$ If $\phi(h_{0})=0,$ then $\phi\mid_{H_{L}}=0.$
\end{proposition}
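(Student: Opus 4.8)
The plan is to reduce the statement to the homogeneous components of $\phi$ supplied by Lemma~\ref{chenyuanl1} and then exploit two independent facts: a linear-algebra rigidity for maps that vanish on a hyperplane, and the number-theoretic genericity built into $h_{0}$.

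First I would decompose $\phi=\sum_{i,\alpha}\phi_{i\times\alpha}$ according to Lemma~\ref{chenyuanl1}, so that for each $x$ one has $\phi_{i\times\alpha}(x)=[u_{x}^{i,\alpha},x]$ with $u_{x}^{i,\alpha}\in L'_{i}\cap L'_{\alpha}$. Since the subspaces $L'_{i}\cap L'_{\alpha}$ are in direct sum and $\phi(h_{0})=0$, each component satisfies $\phi_{i\times\alpha}(h_{0})=0$; thus it suffices to prove $\phi_{i\times\alpha}\mid_{H_{L}}=0$ for every pair $(i,\alpha)$. For $h\in H_{L}$ and $u\in L'_{\alpha}$ one has $[u,h]=-\alpha(h)u$, so $\phi_{i\times\alpha}(h)=-\alpha(h)u_{h}^{i,\alpha}$. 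In particular, if $\alpha=\theta$ then $\phi_{i\times\theta}\mid_{H_{L}}=0$ automatically, and more generally $\phi_{i\times\alpha}(h)=0$ whenever $\alpha(h)=0$.

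The heart of the argument is the case $\alpha\neq\theta$. Here $\alpha\mid_{H_{L}}\neq 0$, so its kernel is a hyperplane of $H_{L}$ on which the linear map $\phi_{i\times\alpha}\mid_{H_{L}}$ vanishes by the previous paragraph; choosing $h_{\ast}\in H_{L}$ with $\alpha(h_{\ast})=1$ and writing $h=(h-\alpha(h)h_{\ast})+\alpha(h)h_{\ast}$ forces $\phi_{i\times\alpha}(h)=\alpha(h)\,w_{i,\alpha}$ for the single fixed vector $w_{i,\alpha}:=\phi_{i\times\alpha}(h_{\ast})\in L'_{i}\cap L'_{\alpha}$. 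Evaluating at $h_{0}$ and using $\phi_{i\times\alpha}(h_{0})=0$ gives $\alpha(h_{0})\,w_{i,\alpha}=0$, so everything comes down to showing $\alpha(h_{0})\neq 0$.

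This last point is exactly where the genericity of $h_{0}$ enters, and I expect it to be the main (though short) obstacle. Writing $\alpha(h_{0})=\sum_{j=1}^{l}t^{j}\alpha(h_{j})$ and recalling that the $\alpha(h_{j})$ are integers which are not all zero when $\alpha\neq\theta$, the quantity $\alpha(h_{0})$ is the value at $t$ of a nonzero polynomial $\sum_{j=1}^{l}\alpha(h_{j})T^{j}$ of degree at most $l$ with rational coefficients. Since $t$ is algebraic of degree strictly larger than $l$, it cannot annihilate such a polynomial, whence $\alpha(h_{0})\neq 0$ and therefore $w_{i,\alpha}=0$. Summing the vanishing components yields $\phi\mid_{H_{L}}=0$. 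The one place demanding care is checking that the roots take integer values on the chosen basis $\{h_{1},\dots,h_{l}\}$, so that the polynomial argument applies uniformly across all four Cartan type series.
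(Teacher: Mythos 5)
Your argument is correct, but it is organized quite differently from the paper's. You first split $\phi$ into its $(i,\alpha)$-homogeneous pieces via Lemma~\ref{chenyuanl1} and observe that $\phi(h_{0})=0$ forces each piece to vanish at $h_{0}$ because their values land in linearly independent subspaces; you then use the identity $\phi_{i\times\alpha}(h)=-\alpha(h)u_{h}^{i,\alpha}$ to see that $\phi_{i\times\alpha}\mid_{H_{L}}$ kills the hyperplane $\ker\alpha\cap H_{L}$, hence by linearity is $\alpha(\cdot)\,w_{i,\alpha}$ for a single vector $w_{i,\alpha}$, which the genericity of $t$ (the nonvanishing of the integer-coefficient polynomial $\sum_{j}\alpha(h_{j})T^{j}$ of degree at most $l$ at $t$) then annihilates. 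The paper instead works entirely in coordinates: it expands each witness $u^{i}$ in the monomial basis $x_{k_{1}}\cdots x_{k_{l}}\partial_{j}$, and the role of your hyperplane/linearity step is played by the auxiliary elements $h_{ik}\in\ker(\varepsilon_{k_{1}}+\cdots+\varepsilon_{k_{l}}-\varepsilon_{j})$, whose images under $\phi$ must miss the corresponding root space; this yields the coefficient compatibility $\alpha(h_{i})a^{i}_{j,k_{1},\ldots,k_{l}}=\alpha(h_{i})a^{k}_{j,k_{1},\ldots,k_{l}}$, i.e.\ exactly the statement that all $\phi(h_{i})$ are controlled by one common vector. Both proofs close with the same algebraic-number argument. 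Your version is shorter and avoids the $h_{ik}$ bookkeeping, at the price of leaning on the linearity of the components $\phi_{i\times\alpha}$ guaranteed by Lemma~\ref{chenyuanl1} (which the paper's proof of this proposition does not actually invoke); the coordinate proof is self-contained modulo Lemma~\ref{l1}. Your closing caveat about integrality of $\alpha(h_{j})$ on the standard basis is the right thing to check and holds for all four series.
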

\begin{proof}
For any $h_{i}(i=1, \ldots, l)$, there exists element
$$u^{i}=\sum_{1\leq k_{1}<\ldots< k_{l}\leq n}\sum_{j=1}^{n}a^{i}_{j,k_{1},\ldots, k_{l}}x_{k_{1}}\cdots x_{k_{l}}\partial_{j}\in L',$$
 where $a^{i}_{j,k_{1},\ldots, k_{l}}\in \mathbb{C}$ such that $\phi(h_{i})=[u^{i}, h_{i}].$
 Then
 $$
 \phi(h_{i})=-\sum_{1\leq k_{1}<\ldots< k_{l}\leq n}\sum_{j=1}^{n}a^{i}_{j,k_{1},\ldots, k_{l}}(\varepsilon_{k_{1}}+\cdots+\varepsilon_{k_{l}}-\varepsilon_{j})(h_{i})x_{k_{1}}\cdots x_{k_{l}}\partial_{j}.
 $$
 If $l\neq 1$ or $l=1$ and $k_{1}\neq j,$ then exists   $1\leq k \leq n$ such that $(\varepsilon_{k_{1}}+\cdots+\varepsilon_{k_{l}}-\varepsilon_{j})(h_{k})\neq 0.$ Put $$h_{ik}=(\varepsilon_{k_{1}}+\cdots+\varepsilon_{k_{l}}-\varepsilon_{j})(h_{k})h_{i}-(\varepsilon_{k_{1}}+\cdots+\varepsilon_{k_{l}}-\varepsilon_{j})(h_{i})h_{k}.$$
Then there exists element
$$u^{ik}=\sum_{1\leq k_{1}<\ldots< k_{l}\leq n}\sum_{j=1}^{n}a^{ik}_{j,k_{1},\ldots, k_{l}}x_{k_{1}}\cdots x_{k_{l}}\partial_{j}\in L',$$
 where $a^{ik}_{j,k_{1},\ldots, k_{l}}\in \mathbb{C}$ such that $\phi(h_{ik})=[u^{ik}, h_{ik}].$ Thus $\phi(h_{ik})\cap L_{\varepsilon_{k_{1}}+\cdots+\varepsilon_{k_{l}}-\varepsilon_{j}}=0.$ On the other hand,
 $$\phi(h_{ik})=(\varepsilon_{k_{1}}+\cdots+\varepsilon_{k_{l}}-\varepsilon_{j})(h_{k})\phi(h_{i})-(\varepsilon_{k_{1}}+\cdots+\varepsilon_{k_{l}}-\varepsilon_{j})(h_{i})\phi(h_{k}).$$
 Hence
\begin{eqnarray*}
&& (\varepsilon_{k_{1}}+\cdots+\varepsilon_{k_{l}}-\varepsilon_{j})(h_{k})(\varepsilon_{k_{1}}+\cdots+\varepsilon_{k_{l}}-\varepsilon_{j})(h_{i})a^{i}_{j,k_{1},\ldots, k_{l}}\\
&=&(\varepsilon_{k_{1}}+\cdots+\varepsilon_{k_{l}}-\varepsilon_{j})(h_{k})(\varepsilon_{k_{1}}+\cdots+\varepsilon_{k_{l}}-\varepsilon_{j})(h_{i})a^{k}_{j,k_{1},\ldots, k_{l}},
\end{eqnarray*}
that is
$$
(\varepsilon_{k_{1}}+\cdots+\varepsilon_{k_{l}}-\varepsilon_{j})(h_{i})a^{i}_{j,k_{1},\ldots, k_{l}}=(\varepsilon_{k_{1}}+\cdots+\varepsilon_{k_{l}}-\varepsilon_{j})(h_{i})a^{k}_{j,k_{1},\ldots, k_{l}}.
$$
Then
 $$
 \phi(h_{0})=\sum_{i=1}^{l}t^{i}\phi(h_{i})=-\sum_{i=1}^{l}\sum_{1\leq k_{1}<\ldots< k_{l}\leq n}\sum_{j=1}^{n}t^{i}a^{k}_{j,k_{1},\ldots, k_{l}}(\varepsilon_{k_{1}}+\cdots+\varepsilon_{k_{l}}-\varepsilon_{j})(h_{i})x_{k_{1}}\cdots x_{k_{l}}\partial_{j}.
 $$
 Since $\phi(h_{0})=0,$ we have $a^{k}_{j,k_{1},\ldots, k_{l}}\sum_{i=1}^{l}t^{i}(\varepsilon_{k_{1}}+\cdots+\varepsilon_{k_{l}}-\varepsilon_{j})(h_{i})=0.$
 Since  $t$ is a  algebraic number from $\mathbb{C}$ of degree bigger than $l$ and $(\varepsilon_{k_{1}}+\cdots+\varepsilon_{k_{l}}-\varepsilon_{j})(h_{i})$ are integers, then $\sum_{i=1}^{l}t^{i}(\varepsilon_{k_{1}}+\cdots+\varepsilon_{k_{l}}-\varepsilon_{j})(h_{i})\neq 0.$ Thus $a^{k}_{j,k_{1},\ldots, k_{l}}=0$ and $\phi(h_{i})=0.$ The proof is complete.
\end{proof}

To apply Lemma \ref{l2}, we give the following  proposition.
 \begin{proposition}\label{chenl2}
Let $L$ be a Cartan type Lie superalgebra,   $\phi\in \mathrm{LDer}(L)$  and  $\phi\mid_{L_{-1}\oplus H_{L}}=0$. Then  the following conclusion hold:
\begin{itemize}
  \item [(1)] If  $L\neq \widetilde{S}(n)$ and $\phi\in \mathrm{LDer}_{i\times \alpha}(L),$  where  $i\in \mathbb{Z}$  and $\alpha\in \Delta_{L'}$,  then $\phi$ is zero.
  \item [(2)] If  $L=\widetilde{S}(n)$ and $\phi\in \mathrm{LDer}_{i\times \alpha}(L),$  where  $i\in \mathbb{Z}_{n}$  and $\alpha\in \Delta_{L'}$,  then $\phi$ is zero.
\end{itemize}
\end{proposition}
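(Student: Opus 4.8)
The plan is to work entirely with the bidegree grading of $L'$ and to play the two halves of the hypothesis $\phi\mid_{L_{-1}\oplus H_L}=0$ against the transitivity of Lemma \ref{l2}. First I record the mechanism I will use repeatedly: since $\phi\in\mathrm{LDer}_{i\times\alpha}(L)$, for each $x$ there is $u_x\in L'_{i\times\alpha}$ with $\phi(x)=[u_x,x]$, and bracketing with a bidegree-$(i,\alpha)$ element shifts bidegree by $(i,\alpha)$; hence it suffices to prove $\phi(x)=0$ for $x$ homogeneous of bidegree $(d,\gamma)$. The only structural facts about $L$ I need are that the degree-zero zero-weight space is exactly the Cartan, $L_{0\times 0}=H_L$, and that $L'_{-1}=\mathrm{span}\{\partial_k\}$ consists solely of root vectors for the nonzero roots $-\varepsilon_k$.

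The device that tames the fact that $\phi$ is only \emph{local} (so $u_x$ varies with $x$) is to probe $x$ by an element on which $\phi$ already vanishes, so that a single $u$ governs $\phi(x)$. For the case $\alpha\neq 0$ I fix $h\in H_L$ with $\alpha(h)\neq 0$ and apply $\phi$ to $x+h$: since $\phi(h)=0$ this gives $\phi(x)=[u,x]+[u,h]=[u,x]-\alpha(h)u$ for one element $u=u_{x+h}\in L'_{i\times\alpha}$. Now $[u,x]$ and $\phi(x)$ lie in bidegree $(i+d,\alpha+\gamma)$ while $-\alpha(h)u$ lies in bidegree $(i,\alpha)$; whenever $(d,\gamma)\neq(0,0)$ these bidegrees differ, so comparing the $(i,\alpha)$-components forces $\alpha(h)u=0$, hence $u=0$ and $\phi(x)=0$. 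The only homogeneous vectors not covered are those with $(d,\gamma)=(0,0)$, i.e. $x\in L_{0\times 0}=H_L$, where $\phi$ vanishes by hypothesis. Thus the entire case $\alpha\neq 0$ is settled with no induction.

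The genuinely different step, and the one I expect to be the main obstacle, is the zero root $\alpha=0$, where no Cartan probe can separate bidegrees. If $i<0$ then $i=-1$ and $L'_{-1\times 0}=0$, so $u_x=0$ and $\phi=0$ trivially; hence assume $i\geq 0$. Here I instead probe by $y=\sum_{k}\partial_k\in L_{-1}$, on which $\phi$ vanishes: for homogeneous $x$ of degree $d\geq 0$ one obtains $\phi(x)=[u,x]+\sum_k[u,\partial_k]$ with $u=u_{x+y}\in L'_{i\times 0}$, and since each $[u,\partial_k]$ sits in bidegree $(i-1,-\varepsilon_k)$ while $[u,x]$ and $\phi(x)$ sit in degree $i+d\neq i-1$, comparing bidegrees forces $[u,\partial_k]=0$ for every $k$, that is $[u,L_{-1}]=0$. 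As $u\in L'_{i}$ with $i\geq 0$, transitivity (Lemma \ref{l2}) yields $u=0$ and therefore $\phi(x)=0$; the degree $d=-1$ part is again covered by $\phi\mid_{L_{-1}}=0$. This is precisely where Lemma \ref{l2} is indispensable: the Cartan hypothesis alone cannot reach the zero-root components, and only transitivity converts ``$\phi$ annihilates $L_{-1}$'' into ``$u=0$''. Finally, for $L=\widetilde{S}(n)$ (part (2)) the same two-case argument runs verbatim with the $\mathbb{Z}_n$-grading in place of the $\mathbb{Z}$-grading and the corresponding spanning set of $\widetilde{S}(n)_{-1}$.
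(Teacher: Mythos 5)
Your argument is, at its core, the same as the paper's: both proofs exploit exactly two probes, namely adding a Cartan element $h$ (to use $\phi(h)=0$ together with root--space separation) and adding $\sum_k(\partial_k-\delta_{L,\widetilde S}\xi_k)$ (to use $\phi\mid_{L_{-1}}=0$ together with the transitivity Lemma \ref{l2}). You merely reorganize the case split: the paper first applies transitivity to every representing element and then splits on whether $i\geq 0$ or $i=-1$, whereas you split on $\alpha=\theta$ versus $\alpha\neq\theta$ and dispose of the nonzero-root case by a pure bidegree projection, with no transitivity at all. In the cases it covers, your version is correct and somewhat cleaner than the paper's (in particular you avoid the paper's slightly delicate step where all representing elements are shown to be proportional to a single vector of $L'_{-1}$).

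There is, however, one concrete false step: your structural claim that $L'_{-1}$ consists solely of root vectors for the nonzero roots $-\varepsilon_k$, and hence that $L'_{-1\times\theta}=0$, fails for $L=H(n)$ with $n$ odd. There $n'=n$, so $\partial_n=-\mathrm{D_H}(x_n)$ lies in $H(n)_{-1}$ and is centralized by $H_L=\mathrm{span}\{x_i\partial_i-x_{i'}\partial_{i'}\}$, i.e.\ it is a weight-$\theta$ vector in degree $-1$. Consequently the sub-case $i=-1$, $\alpha=\theta$ is not vacuous as you claim, and neither of your two mechanisms reaches it: no $h\in H_L$ satisfies $\alpha(h)\neq 0$, and $u\in L'_{-1}$ is outside the range of Lemma \ref{l2}. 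Indeed $\mathrm{ad}(\partial_n)$ is a nonzero element of $\mathrm{LDer}_{(-1)\times\theta}(H(n))$ that annihilates $L_{-1}\oplus H_L$, so in this corner the statement itself cannot be proved as written. You should not feel too bad: the paper's own proof stumbles on exactly the same point (it asserts the existence of $h\in H_L$ with $[\partial_k-\delta_{L,\widetilde S}\xi_k,h]\neq 0$ for every $k$, which is false for $k=n$ when $n$ is odd). For every other combination of $L$, $i$ and $\alpha$ your proof is complete and valid, including the $\mathbb{Z}_n$-graded adaptation for $\widetilde S(n)$.
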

\begin{proof}
For any $x\in \oplus_{j\geq 0}L_{j},$ there exists an element $u_{x}\in L'_{i}\cap L'_{\alpha}$ such that
\begin{eqnarray*}
\phi(x)&=&\phi(x+\partial_{1}+\cdots+ \partial_{n}-\delta_{L,\widetilde{S}}(\xi_{1}+\cdots+ \xi_{n}))\\
&=&[u_{x}, x+\partial_{1}+\cdots+ \partial_{n}-\delta_{L,\widetilde{S}}(\xi_{1}+\cdots+ \xi_{n})]\in \oplus_{j\geq 0}L_{i+j}.
\end{eqnarray*}
Since
$$[u_{x}, \partial_{1}+\cdots+ \partial_{n}-\delta_{L,\widetilde{S}}(\xi_{1}+\cdots+ \xi_{n})]\in L_{i-1}$$
 and $[u_{x}, x]\in \oplus_{j\geq 0}L_{i+j}$, then
 $$[u_{x}, \partial_{1}+\cdots+ \partial_{n}-\delta_{L,\widetilde{S}}(\xi_{1}+\cdots+ \xi_{n})]=0.$$
  Because $\partial_{1}-\delta_{L,\widetilde{S}}\xi_{1}, \ldots, \partial_{n}-\delta_{L,\widetilde{S}} \xi_{n}$ belong to different root space, so  $[u_{x}, \partial_{j}-\delta_{L,\widetilde{S}}\xi_{j}]=0$ for all $1\leq j \leq n.$
By Lemma \ref{l2},  we have $u_{x}\in L'_{-1}.$ Note that   every root space of $L'_{-1}$ is one dimension. Then there is $1\leq k \leq l$ and $a_{x}\in \mathbb{C}$ such that $u_{x}=a_{x}(\partial_{k}-\delta_{L,\widetilde{S}}\xi_{k}).$
Take $h\in H_{L}$ satisfied $[(\partial_{k}-\delta_{L,\widetilde{S}}\xi_{k}), h]\neq 0.$ By
\begin{eqnarray*}\label{e2}
[a_{x}(\partial_{k}-\delta_{L,\widetilde{S}}\xi_{k}), x]=\phi(x)=\phi(h+x)=[a_{h+x}(\partial_{k}-\delta_{L,\widetilde{S}}\xi_{k}), h+x],
\end{eqnarray*}
we have $a_{h+x}[(\partial_{k}-\delta_{L,\widetilde{S}}\xi_{k}), h]=0$ for all $x\in \oplus_{i\geq 1}L_{i}$ or $x\in L_{0}\cap L_{\beta},$ where $\theta \neq \beta\in \Delta_{L}.$ Then $a_{h+x}=0,$ that is $\phi(x)=0$ for all    $x\in \oplus_{i\geq 1}L_{i}$ or $x\in L_{0}\cap L_{\beta},$ where $\theta \neq \beta\in \Delta_{L}.$ This together with $\phi\mid_{L_{-1}\oplus H_{L}}=0$ implies that $\phi=0.$ The proof is complete.
\end{proof}
By Proposition \ref{chenl2},  we have the following   local superderivations vanishing  propositions.
\begin{proposition}\label{pro4.1}
Let $L$ be a Cartan type Lie superalgebra,   $\phi\in \mathrm{LDer}(L)$  and  $\phi\mid_{H_{L}}=0$. Then  the following conclusion hold:
\begin{itemize}
  \item [(1)] If  $L\neq \widetilde{S}(n)$ and $\phi\in \mathrm{LDer}_{i\times \alpha}(L),$  where  $i\in \mathbb{Z}$  and $\theta\neq\alpha\in \Delta_{L'}$,  then $\phi$ is zero.
  \item [(2)] If  $L=\widetilde{S}(n)$ and $\phi\in \mathrm{LDer}_{i\times \alpha}(L),$  where  $i\in \mathbb{Z}_{n}$  and $\theta\neq\alpha\in \Delta_{L'}$,  then $\phi$ is zero.
\end{itemize}
\end{proposition}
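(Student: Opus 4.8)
The plan is to reduce to Proposition \ref{chenl2}: its conclusion is exactly the one we want, but it assumes the stronger vanishing $\phi|_{L_{-1}\oplus H_L}=0$, whereas here we are only given $\phi|_{H_L}=0$. Hence the entire task is to promote $\phi|_{H_L}=0$ to $\phi|_{L_{-1}}=0$, and this is precisely where the extra hypothesis $\alpha\neq\theta$ enters: since $\alpha$ is then a nonzero functional on $H_L$, there exists $h\in H_L$ with $\alpha(h)\neq 0$.

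For part (1), I would fix an arbitrary $x\in L_{-1}$ and choose $h\in H_L$ with $\alpha(h)\neq 0$. Because $\phi\in\mathrm{LDer}_{i\times\alpha}(L)$, there is $u\in L'_i\cap L'_\alpha$ with $\phi(h+x)=[u,h+x]$. Here $h$ is even, lies in $L_0$, and carries the zero root, while $u$ has root $\alpha$, so (using the sign convention $[u,h]=-\alpha(h)\,u$ already exploited in Proposition \ref{chenl1}) we get $[u,h]=-\alpha(h)\,u\in L'_i$, whereas $[u,x]\in L'_{i-1}$. At the same time linearity together with $\phi(h)=0$ gives $\phi(h+x)=\phi(x)$, and applying the local property at $x$ alone yields some $v\in L'_i\cap L'_\alpha$ with $\phi(x)=[v,x]$, which is homogeneous of $\mathbb{Z}$-degree $i-1$. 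Thus in
$$\phi(x)=-\alpha(h)\,u+[u,x]$$
the left-hand side is concentrated in degree $i-1$, while $-\alpha(h)u$ sits in degree $i\neq i-1$; comparing the degree-$i$ components forces $\alpha(h)\,u=0$, hence $u=0$, and therefore $\phi(x)=\phi(h+x)=[u,h+x]=0$.

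Since $x\in L_{-1}$ was arbitrary, this establishes $\phi|_{L_{-1}}=0$; combined with the hypothesis $\phi|_{H_L}=0$ it gives $\phi|_{L_{-1}\oplus H_L}=0$, and Proposition \ref{chenl2} then delivers $\phi=0$. Part (2) runs verbatim with the $\mathbb{Z}_n$-grading of $\widetilde{S}(n)$ and $L_{-1}=\mathrm{span}_{\mathbb{C}}\{\partial_j-\xi_j\}$; the only extra point is that degrees $i$ and $i-1$ remain distinct modulo $n$, which holds because $n\geq 4$.

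The bookkeeping is routine, so the only genuine care needed is to confirm that the degree-$i$ and degree-$(i-1)$ pieces of $\phi(x)$ really separate — immediate over $\mathbb{Z}$, and valid over $\mathbb{Z}_n$ precisely because $n\geq 4$ — and that a single $h$ with $\alpha(h)\neq 0$ is available, which is guaranteed exactly by the assumption $\alpha\neq\theta$. Beyond this I expect no serious obstacle.
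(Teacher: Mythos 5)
Your proposal is correct and follows essentially the same route as the paper: both reduce to Proposition \ref{chenl2} by using an element $h\in H_L$ with $\alpha(h)\neq 0$, comparing $\phi(x)=[v,x]$ with $\phi(h+x)=-\alpha(h)u+[u,x]$, and separating the $\mathbb{Z}$-degree $i$ and $i-1$ components to force $u=0$. The only cosmetic difference is that the paper runs the argument on the basis vectors $\partial_k-\delta_{L,\widetilde{S}}\xi_k$ of $L_{-1}$ while you take an arbitrary $x\in L_{-1}$, and you make explicit the grading comparison that the paper leaves implicit in the step ``since $\alpha\neq\theta$, we have $v_k=0$.''
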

\begin{proof}
Since $\alpha\neq \theta,$ there exists an element $h\in H_{L}$ such that $\alpha(h)\neq 0.$  Let   $1\leq k\leq n$. By Lemma  \ref{l1}, we know that there is $u_{k}, v_{k}\in L'_{\alpha}$ such that
$$
\phi(\partial_{k}-\delta_{L,\widetilde{S}}\xi_{k})=[u_{k}, \partial_{k}-\delta_{L,\widetilde{S}}\xi_{k}],
$$
$$\phi(h+\partial_{k}-\delta_{L,\widetilde{S}}\xi_{k})=[v_{k}, h+\partial_{k}-\delta_{L,\widetilde{S}}\xi_{k}].$$
 Since $\phi(H_{L})=0,$
\begin{eqnarray*}
&&[u_{k}, \partial_{k}-\delta_{L,\widetilde{S}}\xi_{k}]=\phi(\partial_{k}-\delta_{L,\widetilde{S}}\xi_{k})=\phi(h+\partial_{k}-\delta_{L,\widetilde{S}}\xi_{k})\\
&=&[v_{k}, h+\partial_{k}-\delta_{L,\widetilde{S}}\xi_{k}]=-\alpha(h)v_{k}+[v_{k}, \partial_{k}-\delta_{L,\widetilde{S}}\xi_{k}].
\end{eqnarray*}
Since $\alpha\neq \theta$,  we have $v_{k}=0.$ Then
$$\phi(\partial_{k}-\delta_{L,\widetilde{S}}\xi_{k})=\phi(h+\partial_{k}-\delta_{L,\widetilde{S}}\xi_{k})=[v_{k}, h+\partial_{k}-\delta_{L,\widetilde{S}}\xi_{k}]=0,$$
that is $\phi\mid_{L_{-1}}=0.$ By Proposition  \ref{chenl2}, we have $\phi=0.$
\end{proof}

\begin{proposition}\label{pro4.2}
Let $L$ be a Cartan type Lie superalgebra and  $\phi\in \mathrm{LDer}(L)$. Then  the following conclusion hold:
\begin{itemize}
  \item [(1)] If  $L\neq \widetilde{S}(n)$ and $\phi\in \mathrm{LDer}_{i\times \theta}(L),$  where  $i\in \mathbb{Z}$,  then $\phi$ is a superderivation.
  \item [(2)] If  $L=\widetilde{S}(n)$ and $\phi\in \mathrm{LDer}_{i\times \theta}(L),$  where  $i\in \mathbb{Z}_{n}$,  then $\phi$ is a superderivation.
\end{itemize}
\end{proposition}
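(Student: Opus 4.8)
The plan is to produce a single element $u\in L'_i\cap L'_\theta$ for which $\phi=\mathrm{ad}\,u$; since $\mathrm{Der}\,L=\mathrm{ad}\,L'$ by Lemma \ref{l1}, this exhibits $\phi$ as a superderivation and settles both (1) and (2) at once. I would pin $u$ down using only the bottom layer $L_{-1}$ and then kill the remaining difference with the transitivity-based vanishing result in Proposition \ref{chenl2}, rather than attempting to diagonalize $\phi$ on root spaces.

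First I would record that $\phi$ vanishes on the Cartan subalgebra. For $h\in H_L$ write $\phi(h)=[u_h,h]$ with $u_h\in L'_i\cap L'_\theta$; since $u_h$ has zero root it commutes with $H_L$, so $\phi(h)=0$ and $\phi\mid_{H_L}=0$. Next I would construct $u$. The space $L_{-1}$ has a basis consisting of the vectors $\partial_j-\delta_{L,\widetilde{S}}\xi_j$ ($1\le j\le n$), and a short check shows that each of these lies in a distinct one-dimensional root space: for $W(n),S(n),H(n)$ this is immediate, while for $\widetilde{S}(n)$ one uses that $\varepsilon_1+\cdots+\varepsilon_n$ vanishes on the $\mathfrak{sl}$-type Cartan, so that $\partial_j-\xi_j$ is again a root vector of root $-\varepsilon_j$. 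Evaluating $\phi$ at $x=\sum_{j}(\partial_j-\delta_{L,\widetilde{S}}\xi_j)$ gives $\phi(x)=[u_x,x]$ for one $u_x\in L'_i\cap L'_\theta$; because $u_x$ preserves roots and the roots $-\varepsilon_j$ are pairwise distinct, comparing root components yields $\phi(\partial_j-\delta_{L,\widetilde{S}}\xi_j)=[u_x,\partial_j-\delta_{L,\widetilde{S}}\xi_j]$ for every $j$. Setting $u:=u_x$ I obtain $\phi\mid_{L_{-1}}=(\mathrm{ad}\,u)\mid_{L_{-1}}$.

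Finally I would put $\psi:=\phi-\mathrm{ad}\,u$. For each $y\in L$ one has $\psi(y)=[u_y-u,y]$ with $u_y-u\in L'_i\cap L'_\theta$, so $\psi\in\mathrm{LDer}_{i\times\theta}(L)$; moreover $\psi\mid_{L_{-1}}=0$ by construction and $\psi\mid_{H_L}=0$ since both $\phi$ and $\mathrm{ad}\,u$ annihilate $H_L$. Thus $\psi$ satisfies the hypotheses of Proposition \ref{chenl2}, whence $\psi=0$ and $\phi=\mathrm{ad}\,u$ is a superderivation; the case $L=\widetilde{S}(n)$ runs identically with the $\mathbb{Z}_n$-grading in place of the $\mathbb{Z}$-grading. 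The step I expect to be most delicate is the construction of $u$: one must verify that $L_{-1}$ genuinely decomposes into lines carrying pairwise distinct roots, where the odd $H(n)$ middle index and the $\widetilde{S}(n)$ twist are the ticklish points, since it is exactly this distinctness that lets a single $u_x$ be extracted from the sum. I would emphasize that the naive alternative, namely trying to show that $\phi$ acts as a scalar on each root space, fails here: for $H(n)$ the zero-root space $L'_\theta$ is not toral and the root spaces are not homogeneous for the $\mathbb{Z}$-grading, so routing the argument through $L_{-1}$ and Proposition \ref{chenl2} is what makes the proof uniform in $i$.
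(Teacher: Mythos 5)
Your argument is correct and follows essentially the same route as the paper: the paper likewise pins down $u$ by evaluating $\phi$ on $\partial_1+\cdots+\partial_n-\delta_{L,\widetilde{S}}(\xi_1+\cdots+\xi_n)$, matches the result against the individual values $\phi(\partial_k-\delta_{L,\widetilde{S}}\xi_k)=[u_k,\partial_k-\delta_{L,\widetilde{S}}\xi_k]$ using the pairwise distinctness of the roots carried by $L_{-1}$, and then applies Proposition \ref{chenl2} to $\phi-\mathrm{ad}\,u$. Your explicit verifications that $\phi-\mathrm{ad}\,u$ remains in $\mathrm{LDer}_{i\times\theta}(L)$ and vanishes on $H_{L}$ are exactly the facts the paper uses (the latter only implicitly), so there is no substantive difference.
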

\begin{proof}
Let $1\leq k\leq n.$ For $\partial_{k}-\delta_{L,\widetilde{S}}\xi_{k}$, there exists an element $u_{k}\in L'_{\theta}$ such that $$\phi(\partial_{k}-\delta_{L,\widetilde{S}}\xi_{k})=[u_{k}, \partial_{k}-\delta_{L,\widetilde{S}}\xi_{k}].$$
 For $\partial_{1}+\cdots+ \partial_{n}-\delta_{L,\widetilde{S}}(\xi_{1}+\cdots+ \xi_{n})$, there exists an element $u\in L'_{\theta}$ such that
 $$
 \phi(\partial_{1}+\cdots+ \partial_{n}-\delta_{L,\widetilde{S}}(\xi_{1}+\cdots+ \xi_{n}))=[u, \partial_{1}+\cdots+ \partial_{n}-\delta_{L,\widetilde{S}}(\xi_{1}+\cdots+ \xi_{n})].
 $$
Since $\phi$ is a linear map,
$$
[u, \partial_{1}+\cdots+ \partial_{n}-\delta_{L,\widetilde{S}}(\xi_{1}+\cdots+ \xi_{n})]=[u_{1}, \partial_{1}-\delta_{L,\widetilde{S}}\xi_{1}]+\cdots+[u_{n}, \partial_{n}-\delta_{L,\widetilde{S}}\xi_{n}].
$$
Then
$$
[u, \partial_{k}-\delta_{L,\widetilde{S}}\xi_{k}]=[u_{i}, \partial_{k}-\delta_{L,\widetilde{S}}\xi_{k}]
$$
for all $1\leq k\leq n.$ Put $\phi'=\phi-\mathrm{ad}u.$ Then $\phi'(\partial_{k}-\delta_{L,\widetilde{S}}\xi_{k})=0$ for all $1\leq k\leq n,$ that is $\phi'\mid_{L_{-1}}=0.$ Since $\phi\in \mathrm{LDer}_{i\times \theta}(L)$, we have $\phi'\mid_{H_{L}}=0$. By Proposition  \ref{chenl2}, we have $\phi'=0.$ Then   $\phi=\mathrm{ad}u$ is a superderivation.
 The proof is complete.
\end{proof}
By Propositions \ref{chenl1}, \ref{pro4.1} and \ref{pro4.2}, we have the following theorem.
\begin{theorem}\label{t1}
Let $L$ be a Cartan type Lie superalgebra. Then
$$\mathrm{LDer}(L)=\mathrm{Der}(L).$$
\end{theorem}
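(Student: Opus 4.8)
The plan is to prove the two inclusions of $\mathrm{LDer}(L)=\mathrm{Der}(L)$ separately. The inclusion $\mathrm{Der}(L)\subseteq\mathrm{LDer}(L)$ is immediate from the definition: a superderivation $D$ witnesses itself, taking $D_{x}=D$ for every $x$. The substance is the reverse inclusion $\mathrm{LDer}(L)\subseteq\mathrm{Der}(L)$, which I would obtain by assembling Propositions \ref{chenl1}, \ref{pro4.1} and \ref{pro4.2} along the $\mathbb{Z}\times\Delta_{L'}$-grading (resp.\ the $\mathbb{Z}_{n}\times\Delta_{L'}$-grading for $\widetilde{S}(n)$) supplied by Lemma \ref{chenyuanl1}.

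First I would normalize $\phi$ on the Cartan subalgebra. Given $\phi\in\mathrm{LDer}(L)$, Lemma \ref{l1} produces $u\in L'$ with $\phi(h_{0})=[u,h_{0}]$; set $D=\mathrm{ad}\,u\in\mathrm{Der}(L)$ and put $\psi=\phi-D$. Since the difference of a local superderivation and a superderivation is again a local superderivation (for each $x$ one has $\psi(x)=(D_{x}-D)(x)$ with $D_{x}-D\in\mathrm{Der}\,L$), we get $\psi\in\mathrm{LDer}(L)$ with $\psi(h_{0})=0$. Proposition \ref{chenl1} then forces $\psi\mid_{H_{L}}=0$.

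Next I would split $\psi$ into graded pieces. By Lemma \ref{chenyuanl1} write $\psi=\sum_{i,\alpha}\psi_{i\times\alpha}$ with $\psi_{i\times\alpha}\in\mathrm{LDer}(L)_{i\times\alpha}$. Because $\psi_{i\times\alpha}$ carries the degree-$i$, root-$\alpha$ data into the correspondingly shifted graded component, the identity $\sum_{i,\alpha}\psi_{i\times\alpha}(h)=\psi(h)=0$ for $h\in H_{L}$ splits across distinct graded components, giving $\psi_{i\times\alpha}\mid_{H_{L}}=0$ for every pair $(i,\alpha)$. For $\alpha\neq\theta$, Proposition \ref{pro4.1} yields $\psi_{i\times\alpha}=0$; for the zero root $\alpha=\theta$, Proposition \ref{pro4.2} gives that $\psi_{i\times\theta}$ is a superderivation. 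Hence $\psi=\sum_{i}\psi_{i\times\theta}\in\mathrm{Der}(L)$, and therefore $\phi=\psi+D\in\mathrm{Der}(L)$.

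The point requiring care, rather than a genuine obstacle, is the bookkeeping above: one must verify that subtracting the inner derivation $D$ keeps $\psi$ inside $\mathrm{LDer}(L)$, and that the graded decomposition of $\mathrm{LDer}(L)$ is compatible with restriction to $H_{L}$, so that the global vanishing $\psi\mid_{H_{L}}=0$ descends to each homogeneous piece. All the genuinely hard content---the transitivity of $L'$ from Lemma \ref{l2}, channelled through Proposition \ref{chenl2}, together with the root-by-root vanishing---has already been isolated in the propositions, so the theorem is essentially their synthesis. The only remaining subtlety is to run the argument uniformly over both grading conventions ($\mathbb{Z}$ when $L\neq\widetilde{S}(n)$ and $\mathbb{Z}_{n}$ when $L=\widetilde{S}(n)$), which changes only the index set and not the reasoning.
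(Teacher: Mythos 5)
Your proposal is correct and follows essentially the same route as the paper: normalize on $h_{0}$ by subtracting an inner superderivation, invoke Proposition \ref{chenl1} to kill $\phi$ on $H_{L}$, then decompose via Lemma \ref{chenyuanl1} and apply Propositions \ref{pro4.1} and \ref{pro4.2} componentwise. The extra bookkeeping you supply (that $\psi=\phi-\mathrm{ad}\,u$ remains a local superderivation, and that vanishing on $H_{L}$ descends to each graded piece because the images $[u_{h}^{i,\alpha},h]$ lie in distinct components) is correct and merely makes explicit what the paper leaves implicit.
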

\begin{proof}
Only the ``$\subseteq$'' part needs a verification. Let $\phi\in \mathrm{LDer}(L).$ By Lemma \ref{l1},  there exists an element
$u\in L'$ such that $\phi(h_{0})=[u, h_{0}].$ Put $\phi'=\phi-\mathrm{ad}u.$ Then $\phi'(h_{0})=0.$
By Proposition \ref{chenl1}, we have $\phi'\mid_{H_{L}}=0.$ Propositions \ref{pro4.1} and \ref{pro4.2}  together with Lemma
 \ref{chenyuanl1} implies that $\phi'\in \mathrm{Der}(L).$ Therefore, $\phi\in \mathrm{Der}(L).$
\end{proof}

\section{Applications}
In this  section, we will characterize the $2$-local superderivation of    Cartan type Lie superalgebras. In \cite{s1} P. $\breve{S}$emrl introduced the concept of $2$-local derivations. Moreover, the author proved that every $2$-local derivation on B(H) is a derivation. Similarly, some authors started to describe  $2$-local derivation.  In \cite{kk} S. Kim and J. Kim give a short proof of that every 2-local derivation on the algebra $M_{n}(\mathbb{C})$ is a derivation.  A similar description for the finite-dimensional case appeared later in  \cite{m}. In the paper \cite{lw} 2-local derivations and automorphisms have been described on matrix algebras over finite-dimensional division rings.   Later J. Zhang and H. Li \cite{zl} extended the above result for arbitrary symmetric digraph matrix algebras and construct an example of 2-local derivation which is not a derivation on the algebra of all upper triangular complex $2\times2$ matrices. £ In \cite{ff}  Fo$\breve{s}$ner introduced the concept of $2$-local superderivations on the associctive superalgebra and  the authors proved that every $2$-local superderivation on superalgebra $M_{n}(\mathbb{C})$ is a superderivation.
In 2017,  Chen, Wang
and  Nan  mainly studied   $2$-local superderivations on basic classical Lie Superalgebras,   the authors proved that every  $2$-local superderivations on basic classical Lie superalgebras except for $A(1, 1)$ over the complex number field $\mathbb{C}$ is a superderivation\cite{wcn}.

Using the results on local superderivations  we have the following theorem.
\begin{theorem}
Let $L$ be a Cartan type Lie superalgebra. Then
every $2$-local superderivation of   $L$ is a superderivation.
\end{theorem}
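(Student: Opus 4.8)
The plan is to reduce everything to Theorem \ref{t1} by showing that a $2$-local superderivation $\Delta$ of $L$ is linear; once linearity is known, $\Delta$ is in particular a local superderivation, so $\Delta\in\mathrm{LDer}(L)=\mathrm{Der}(L)$ and we are finished. Homogeneity is immediate: for $x\in L$ and $\lambda\in\mathbb{C}$, applying the $2$-local condition to the pair $(x,\lambda x)$ produces a single $D\in\mathrm{Der}(L)$ with $\Delta(x)=D(x)$ and $\Delta(\lambda x)=D(\lambda x)=\lambda D(x)=\lambda\Delta(x)$. Thus the whole weight of the argument falls on additivity, and the purpose of the earlier sections is to supply enough rigidity to force it.

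First I would normalize $\Delta$ by an inner superderivation. By Lemma \ref{l1} there is $u\in L'$ with $\Delta(h_0)=[u,h_0]$; replacing $\Delta$ by $\Delta-\mathrm{ad}\,u$, which is again a $2$-local superderivation, I may assume $\Delta(h_0)=0$. For an arbitrary $x\in L$, the pair $(h_0,x)$ yields $v_x\in L'$ with $[v_x,h_0]=0$ and $\Delta(x)=[v_x,x]$. Decomposing $v_x$ along the $\mathbb{Z}\times\Delta_{L'}$-grading and using that $\alpha(h_0)\neq 0$ for every $\alpha\neq\theta$ (the degree-of-$t$ estimate underlying Proposition \ref{chenl1}), the relation $[v_x,h_0]=0$ forces every nonzero homogeneous component of $v_x$ to have zero root; that is, $v_x\in\bigoplus_i L'_{i\times\theta}$, the centralizer of $H_L$. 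In particular $\Delta(h)=[v_x,h]=0$ for all $h\in H_L$, so $\Delta\mid_{H_L}=0$, and since each $v_x$ has zero root, $\Delta$ preserves every root space of $L$.

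The hard part will be additivity, precisely because the implementing element $v_x$ depends on $x$: given $x,y$, the $2$-local condition supplies a derivation $D$ with $\Delta(x)=D(x)$ and $\Delta(y)=D(y)$, whence $D(x+y)=\Delta(x)+\Delta(y)$, but nothing yet forces $\Delta(x+y)=D(x+y)$. To close this gap I would argue root space by root space. For homogeneous $x\in L_\beta$ and $y\in L_\gamma$ with $\beta\neq\gamma$, the images $\Delta(x)$, $\Delta(y)$, $\Delta(x+y)$ land in distinct root spaces by the previous paragraph, so additivity reduces to the homogeneous pieces, each pinned down by pairing with $h_0$ as above. The delicate cases are two vectors lying in the \emph{same} root space and elements spread over several $\mathbb{Z}$-degrees; here I would exploit the transitivity of $L'$ (Lemma \ref{l2}) together with the one-dimensionality of the root spaces of $L'_{-1}$ to show that the various $v_x$ may be chosen compatibly, which forces $\Delta$ to be additive and hence linear. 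Having established linearity, I would conclude exactly as in the proof of Theorem \ref{t1}: by Lemma \ref{chenyuanl1} and Propositions \ref{pro4.1} and \ref{pro4.2} the normalized $\Delta$ is a local superderivation vanishing on $H_L$, hence a superderivation, and undoing the normalization shows the original map $\Delta=\mathrm{ad}\,u+(\text{superderivation})$ is a superderivation.
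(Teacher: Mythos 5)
Your opening reduction is in fact the paper's entire proof, and everything after it is unnecessary under the paper's conventions. Definition 2.4 already declares a $2$-local superderivation to be a \emph{linear} map satisfying the pointwise condition, so linearity is not something to be proved: one simply observes that for each $x$ the pair $(x,y)$ (any $y$) supplies a $D_{x,y}\in\mathrm{Der}(L)$ with $\phi(x)=D_{x,y}(x)$, hence every $2$-local superderivation is a local superderivation, and Theorem \ref{t1} finishes the argument. That one-line observation is exactly what the paper does. Your proposal is therefore correct as a proof of the stated theorem, but only because its first sentence already contains the whole argument; the remaining three paragraphs address a strictly stronger statement, namely the version in which $2$-local maps are not assumed linear (Šemrl's original convention).

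If you do intend to prove that stronger statement, your argument as written has a genuine gap at the decisive point. The normalization by $\mathrm{ad}\,u$, the homogeneity argument via the pair $(x,\lambda x)$, and the observation that $[v_x,h_0]=0$ forces $v_x$ into the zero-root space (so that $\Delta$ preserves root spaces) are all fine. But additivity for two elements of the \emph{same} root space, and for elements spread over several $\mathbb{Z}$-degrees, is precisely where such proofs live or die, and you only gesture at Lemma \ref{l2} and the one-dimensionality of the root spaces of $L'_{-1}$ without showing how the implementing elements $v_x$, $v_y$, $v_{x+y}$ are actually forced to agree; the $2$-local condition applied to $(x,y)$ gives a single $D$ computing $\Delta(x)$ and $\Delta(y)$ but, as you yourself note, nothing yet ties $\Delta(x+y)$ to $D(x+y)$. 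Until that step is carried out, the non-linear version remains unproved. For the theorem as the paper states it, you should simply cite Definition 2.4 and stop after your first sentence.
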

\begin{proof}
 By the definition of $2$-local superderivation, we know that   every $2$-local superderivation of $L$ is  a local superderivation. Let $\phi$ is an $2$-local superderivation of   $L$. Then $\phi\in \mathrm{LDer}(L)$. By Theorem \ref{t1}, we have $\phi\in \mathrm{Der}(L).$
\end{proof}

%%%%%%%%%%%%%%%%%%%%%%%%%%%%%%%%%%%%%%%%%%%%%%%%%%%%%%%%%%%%%%%%%%%%%%%%%%%%%%%%%%%%

\end{document}